\theoremstyle{plain}
\newtheorem{theorem}{Theorem}[section]
\newtheorem{corollary}[theorem]{Corollary}
\newtheorem{proposition}[theorem]{Proposition}
\newtheorem{example}[theorem]{Example}
\theoremstyle{definition}
\newtheorem{definition}{Definition}[section]
\theoremstyle{remark}
\newtheorem{remark}{Remark}[section]
\newtheorem*{nota*}{Notation}
\newtheorem*{Acknowledgement}{Acknowledgements}
\newcommand{\rk}{\mathrm{rk}}
\newcommand{\CC}{\mathbb{C}}
\newcommand{\caM}{\mathcal{M}}
\title{A note on the simultaneous Waring rank of monomials}
\author{Enrico Carlini}
\address{Department of Mathematical Sciences ``Dipartimento di eccellenza 2018-2022'', Politecnico di Torino, Turin, Italy}
\email{enrico.carlini@polito.it}
\author{Emanuele Ventura}
\address{Department of Mathematics, Texas A\&M University, College Station, U.S.A.}
\email{eventura@math.tamu.edu}
\keywords{Waring ranks, Apolarity, Inclusion-exclusion principle}
\subjclass[2000]{51N35, 05E15, 05E40}
\begin{document}

\maketitle

\date{}

\begin{abstract}

In this paper we study the complex simultaneous Waring rank for collections of monomials. For general collections we provide a lower bound, whereas for special collections we provide a formula for the simultaneous Waring rank. Our approach is algebraic and combinatorial. We give an application to ranks of binomials and maximal simultaneous ranks. Moreover, we include an appendix of scripts written in the algebra software \texttt{Macaulay2} to experiment with simultaneous ranks.

\end{abstract}


\section{Introduction}
\indent The problem of determining a minimal simultaneous decomposition for several homogeneous polynomials dates back to the work of Terracini \cite{Terr}, appeared in 1915, where even the existence of defective cases was observed; for more defective cases, see \cite{CC2003}. More precisely, let $S=\mathbb C[x_0,\ldots,x_n]$ and let $\mathcal F = \lbrace F_1,\ldots,F_s\rbrace$ be a collection of homogeneous polynomials, or {\it forms}, $F_1,\ldots, F_s \in S$.
The {\it simultaneous Waring rank} over $\mathbb C$ of $\mathcal F$, denoted by $\textnormal{rk}_{\mathbb C} \mathcal F$, is
the minimal integer $r$ such that there exists linear forms $L_1,\ldots, L_r$ with the property that every $F_i$ may be written as
$$
F_i = \sum_{j=1}^r \lambda_j L_j^{d_i}, \mbox{ where } d_i = \deg F_i, \lambda_j \in \mathbb C.
$$
\noindent Equivalently, the linear forms $L_j$ minimally decompose simultaneously all the forms $F_i$; note that the degrees $d_i$ need not be necessarily the same. The simultaneous decompositions as above are a direct generalization of the {\it simultaneous diagonalization} of matrices to the context of homogeneous polynomials and, more generally, of tensors. They have recently appeared in statistics, in the context of decompositions of moments \cite[Section 3]{AGHKT}. \\
\indent Terracini's geometric approach led to the notion of {\it Grassmann defectivity} of projective varieties; see \cite[Definition 1.1]{DF} for Veronese surfaces. In his article, Terracini observed that the Veronese variety of dimension $n$ and degree $d$ has $(k,h)$-{\it Grassmann secant variety} filling up all the space if and only if the generic collection of $k+1$ forms of degree $d$ can be written as a linear combination of the powers of $h+1$ linear forms. More recently, Angelini, Galuppi, Mella, and Ottaviani  \cite{AGMO} studied identifiability properties for simultaneous decompositions of general collections.\\
\indent This note is inspired by the problem of explicitly determining the complex simultaneous Waring rank of  special collections of {\it monomials}. Since monomials are the sparsest forms and are particularly amenable to combinatorial approaches, our point of view to investigate this question is mainly algebraic and combinatorial. \\
\indent The structure of this note is the following. In Section 2, we recall a useful lower bound (Proposition \ref{lower bound general}) for a collection of monomials $\mathcal M$. We interpret it combinatorially in Proposition \ref{inclusion-exclusion} as an {\it inclusion-exclusion formula} \cite[Chapter 2]{St} of complex ranks of greatest common divisors of monomials varying in all subsets of $\mathcal M$. In Section 3, we derive a formula for the complex simultaneous Waring rank for special pairs (Theorem \ref{sim Waring for pairs}). This extends to an inclusion-exclusion formula of complex ranks of greatest common divisors for an arbitrary number of special monomials in  Proposition \ref{inclusion-exclusion for c>1} and Proposition \ref{inclusion-exclusion for c=1}. Finally, for collections given by all first derivatives of a given monomial whose exponents are strictly larger than one, we determine the complex simultaneous Waring rank in Proposition \ref{LemmaDerivatives}. In particular, this shows that not for every collection the complex simultaneous Waring rank is an inclusion-exclusion as before. In Section 4, we give an application of the simultaneous Waring rank to ranks of binomials and maximal simultaneous ranks of forms. Finally, in the Appendix we include scripts in the algebra software \texttt{Macaulay2} \cite{M2}, which are hopefully useful to implement our results.

\section{A lower bound}

In this section, we recall a useful lower bound for the simultaneous Waring rank of a collection of special monomials. Before we proceed, we keep the notation from above and we let $T=\mathbb C[X_0,\ldots, X_n]$ act by differentiation on $S$. Given a form $F\in S$,
the {\it apolar ideal} of $F$ is the ideal $F^{\perp} = \lbrace G\in T \ | \ G(F) = 0\rbrace\subset T$. The apolarity lemma \cite[Lemma 1.15]{IK} states that, whenever an ideal of reduced points $I_{\mathbb X}$ is contained in $F^{\perp}$, $F$ admits a Waring decomposition with linear forms dual to the points in $\mathbb X$. In such a case, $\mathbb X$ is an {\it apolar scheme} to $F$. \\
\indent We are now ready to introduce some lower bounds.

\begin{proposition}\label{algorithmic lower bound general}

Let $\mathcal F= \lbrace F_j\rbrace_{j\in J}$ be a collection of forms. The simultaneous Waring rank $\textnormal{rk}_{\mathbb C} \mathcal F$ is at least the dimension of the finite $\mathbb C$-algebra $A=T/I$, where \[I=(L)+\cap_{j\in J} ({F}_j^{\perp}:(L))\] and $L$ is any linear form.

\begin{proof}
Let $I_{\mathbb X}\subset\cap_{j\in J} F_j^\perp$. In order to show the statement, we have to establish a lower bound on the cardinality of $\mathbb{X}$. We adapt the proof of \cite[Theorem 3.3]{CCCGW}. Note that the simultaneous rank is at least the dimension of the algebra
\[T/(I_{\mathbb X}:(L)+L)\]
for any linear form $L$.
Let $J=\cap_{j\in J} F_j^\perp$. Since $I_{\mathbb X}\subset J$, we have that
\[I_{\mathbb X}:(L)+(L)\subset J:(L)+(L)\]
and the conclusion follows.
\end{proof}
\end{proposition}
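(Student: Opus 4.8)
The plan is to turn a minimal simultaneous decomposition into an apolar scheme of reduced points, and then bound its cardinality from below by the colon-plus-hyperplane device used for a single form in \cite[Theorem 3.3]{CCCGW}.

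First I would translate the hypothesis through apolarity. Assume $\textnormal{rk}_{\mathbb C}\mathcal F = r$ and fix linear forms $L_1,\dots,L_r$ simultaneously decomposing all the $F_j$; by minimality the $L_i$ are pairwise non-proportional, so the dual points form a reduced zero-dimensional scheme $\mathbb X$ with $\deg\mathbb X = r$. The apolarity lemma gives $I_{\mathbb X}\subseteq F_j^{\perp}$ for each $j\in J$, hence $I_{\mathbb X}\subseteq I_{\mathcal F}:=\bigcap_{j\in J}F_j^{\perp}$. Since every $F_j^{\perp}$ is an Artinian Gorenstein ideal, so is $I_{\mathcal F}$, and therefore $T/I$ is a finite-dimensional algebra for any linear form $L$; it thus suffices to prove $\deg\mathbb X\ge\dim_{\mathbb C}T/I$.

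Then comes the key inequality, which concerns only the reduced scheme $\mathbb X$: for any linear form $L$,
\[
\dim_{\mathbb C}T/\big((I_{\mathbb X}:(L))+(L)\big)\ \le\ \deg\mathbb X .
\]
Here $I_{\mathbb X}:(L)$ is the (saturated) ideal of the subset $\mathbb X'\subseteq\mathbb X$ of points lying off the hyperplane $V(L)$; this is checked prime by prime, since colon by $L$ kills the point ideals containing $L$ and fixes the others. On $T/I_{\mathbb X'}$ the form $L$ is a nonzerodivisor, so the Hilbert function of $T/(I_{\mathbb X'}+(L))$ is the first difference of that of $T/I_{\mathbb X'}$; summing over all degrees telescopes to $\lim_t\dim_{\mathbb C}(T/I_{\mathbb X'})_t=\deg\mathbb X'\le\deg\mathbb X$, which is the claim.

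Finally I would chain the inclusions. From $I_{\mathbb X}\subseteq I_{\mathcal F}$ we get $I_{\mathbb X}:(L)\subseteq I_{\mathcal F}:(L)$, whence
\[
(I_{\mathbb X}:(L))+(L)\ \subseteq\ (I_{\mathcal F}:(L))+(L)=I ,
\]
so $T/I$ is a quotient of $T/((I_{\mathbb X}:(L))+(L))$ and $\dim_{\mathbb C}T/I\le\dim_{\mathbb C}T/((I_{\mathbb X}:(L))+(L))\le\deg\mathbb X=r$. The only step that is more than bookkeeping is the key inequality — identifying $I_{\mathbb X}:(L)$ with the ideal of $\mathbb X'$ and exploiting that $L$ becomes a nonzerodivisor after the colon — so that is where I would concentrate the care; everything else is formal manipulation of ideal containments and colengths of quotients.
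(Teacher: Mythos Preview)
Your argument is essentially the paper's: both take a reduced apolar scheme $\mathbb X$ for the collection, invoke the bound $|\mathbb X|\ge\dim_{\mathbb C}T/\big((I_{\mathbb X}:(L))+(L)\big)$ (the paper simply cites \cite[Theorem 3.3]{CCCGW}, whereas you spell out the nonzerodivisor/telescoping argument), and then pass to the larger ideal $I$ via the containment $I_{\mathbb X}\subseteq\bigcap_{j}F_j^{\perp}$. One harmless slip: the intersection $I_{\mathcal F}=\bigcap_jF_j^{\perp}$ of Artinian Gorenstein ideals is Artinian but need not be Gorenstein --- since you only use finite-dimensionality of $T/I$, nothing in the proof is affected.
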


Note that Proposition \ref{algorithmic lower bound general} provides a tool to experiment with the lower bound on any collection of forms; this is made possible by the first script of the Appendix. Moreover, for special collections of form we are able to derive more explicit lower bounds.

\begin{proposition}\label{lower bound general}

Let $\mathcal M= \lbrace M_j\rbrace_{j\in J}$ be a collection of monomials, where each monomial is of the form $M_i=x_0^{a_{0,j}}x_1^{a_{1,j}}\cdots x_n^{a_{n,j}}$, and $1\leq a_{i,j}$ for every $i,j$. The simultaneous Waring rank $\textnormal{rk}_{\mathbb C} \mathcal M$ is at least the dimension of the finite $\mathbb C$-algebra $A=T/I$, where $I=(X_i)+\cap_{j\in J} M_j^{\perp}$ for any $i$.

\begin{proof}
By Proposition \ref{algorithmic lower bound general}, the sum of the Hilbert function in all degrees of the ideal
\[I=(X_i)+(\cap_{j\in J} M_j^{\perp}): (X_i)=(X_i)+\cap_{j\in J} M_j^{\perp}: (X_i) = (X_i)+\cap_{j\in J} M_j^{\perp}\] gives a lower bound to the degree of an ideal of reduced points in $\cap_{j\in J} M_j^{\perp}$.
\end{proof}
\end{proposition}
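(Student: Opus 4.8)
The plan is to apply Proposition~\ref{algorithmic lower bound general} with the specific linear form $L = X_i$ and then to simplify the ideal it produces, using the explicit shape of the apolar ideal of a monomial together with the standing hypothesis $1 \le a_{i,j}$. Recall that for a monomial $M = x_0^{a_0}\cdots x_n^{a_n}$ the apolar ideal is the monomial complete intersection $M^{\perp} = (X_0^{a_0+1},\ldots,X_n^{a_n+1})$. In particular $\cap_{j\in J} M_j^{\perp}$ contains a power of each variable, so $A = T/I$ is a finite-dimensional $\mathbb{C}$-algebra and Proposition~\ref{algorithmic lower bound general} applies; taking $L = X_i$ there gives
\[
\textnormal{rk}_{\mathbb{C}}\,\mathcal{M} \ \geq\ \dim_{\mathbb{C}} T/\bigl((X_i) + (\cap_{j\in J} M_j^{\perp}):(X_i)\bigr).
\]
Hence it is enough to prove the ideal identity $(X_i) + (\cap_{j\in J} M_j^{\perp}):(X_i) = (X_i) + \cap_{j\in J} M_j^{\perp}$, since then $A = T/I$ coincides with the quotient algebra in the displayed bound.

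This identity I would check combinatorially. Colons commute with intersections, and for each $j$ one has $M_j^{\perp}:(X_i) = (X_0^{a_{0,j}+1},\ldots,X_i^{a_{i,j}},\ldots,X_n^{a_{n,j}+1})$ because $a_{i,j}\ge 1$; so the colon genuinely enlarges $\cap_j M_j^{\perp}$, and the content of the identity is that adding $(X_i)$ cancels this enlargement. Both ideals appearing in the identity are monomial and contain $(X_i)$, so it suffices to show that for a monomial $X^{\alpha}$ not divisible by $X_i$ one has $X_i\cdot X^{\alpha} \in \cap_{j\in J} M_j^{\perp}$ if and only if $X^{\alpha} \in \cap_{j\in J} M_j^{\perp}$. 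The ``if'' direction is immediate. For ``only if'', fix $j$ and suppose $X_i\cdot X^{\alpha}\in M_j^{\perp}$; then some variable $X_k$ occurs in $X_i\cdot X^{\alpha}$ to a power $\ge a_{k,j}+1$. This $k$ cannot equal $i$, since the $X_i$-exponent of $X_i\cdot X^{\alpha}$ is $1$ while $a_{i,j}+1\ge 2$; therefore $X^{\alpha}$ itself already has $X_k$-exponent $\ge a_{k,j}+1$, i.e.\ $X^{\alpha}\in M_j^{\perp}$. As $j$ was arbitrary, $X^{\alpha}\in\cap_{j\in J} M_j^{\perp}$.

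The only genuine step — and the sole place the hypothesis $1\le a_{i,j}$ is used — is the observation that the ``witnessing'' variable $X_k$ above cannot be $X_i$; the rest is formal manipulation of monomial ideals and colons. Once the identity is in hand, the displayed inequality reads $\textnormal{rk}_{\mathbb{C}}\,\mathcal{M}\ge\dim_{\mathbb{C}} A$, and $\dim_{\mathbb{C}} A$ is the sum over all degrees of the Hilbert function of $A$; equivalently, this integer bounds from below the degree of any reduced apolar scheme contained in $\cap_{j\in J} M_j^\perp$, which by the apolarity lemma is exactly what $\textnormal{rk}_{\mathbb{C}}\,\mathcal{M}$ computes.
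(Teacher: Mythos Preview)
Your proof is correct and follows exactly the same route as the paper: apply Proposition~\ref{algorithmic lower bound general} with $L=X_i$ and then verify the ideal identity $(X_i)+(\cap_{j\in J} M_j^{\perp}):(X_i)=(X_i)+\cap_{j\in J} M_j^{\perp}$. You supply more detail than the paper's terse chain of equalities, in particular making explicit why the hypothesis $a_{i,j}\ge 1$ is needed (it rules out $X_i$ as the witnessing variable).
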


As we have established a lower bound for the simultaneous Waring rank of monomials, let us record the following observation concerning an upper bound.

\begin{remark}
Let $\mathcal M = \lbrace M_1,\ldots, M_s\rbrace$ and let $M = \textnormal{lcm}(M_1,\ldots,M_s)$ be their least common multiple. We have $\textnormal{rk}_{\mathbb C} \mathcal M \leq  \textnormal{rk}_{\mathbb C} M$. Indeed, $M^{\perp}\subseteq M_i^{\perp}$, for each $1\leq i\leq s$; hence any ideal of reduced points  $I_{\mathbb X}\subset M^{\perp}$ would be contained in each $M_i^{\perp}$ as well, thus providing a simultaneous decomposition. This upper bound is far from being optimal in general; see Example \ref{ex1}.
\end{remark}

By a further specialization of our collection we can give am explicit combinatorial description of the dimension of the algebra $A$ of Proposition \ref{lower bound general}.

\begin{proposition}\label{inclusion-exclusion}

Let $\mathcal M= \lbrace M_j\rbrace_{j\in J}$, where $M_j=x_0^{a_{0,j}}x_1^{a_{1,j}}\cdots x_n^{a_{n,j}}$, and $1\leq a_{0,j}\leq a_{i,j}$ for every $i,j$. Let $M_{j_1,\ldots, j_k}$ denote the greatest common divisor of $M_{j_1}, \ldots, M_{j_k}$. The dimension of the finite $\mathbb C$-algebra $A=T/I$, where $I=(X_0)+\cap_{j\in J} M_j^{\perp}$ is given by the alternating sum
$$
\dim_{\mathbb C} A = \sum_{\emptyset \neq [k] \subset J} (-1)^{k+1} \textnormal{rk}_{\mathbb C} M_{j_1,\ldots, j_k}.
$$

\begin{proof}

We show that the {\it standard monomials} of $A$, that is, the monomials {\it not} in $I$, are exactly the monomials dividing at least one of the monomials $\tilde{M}_j=X_1^{a_{1,j}}\cdots X_n^{a_{n,j}}$. Suppose $N$ is a monomial dividing $\tilde{M}_j$ for some $j$. Then $N$ cannot be a monomial in $I$, since it does not annihilate $\tilde{M}_j$. Conversely, suppose $N\notin I$. Hence there exists $\tilde{M}_j$ such that $N$ does not annihilate it. This implies that all the exponents of $N$ are at most those of $\tilde{M_j}$. In other words, $N$ is a divisor of $\tilde{M}_j$. \\
\indent Thus the set of standard monomials coincides with the divisors of all of the monomials $\tilde{M}_j=X_1^{a_{1,j}}\cdots X_n^{a_{n,j}}$. The number of these divisors is given by the alternating sum $\sum_{\emptyset \neq [k] \subset J} (-1)^{k+1} \textnormal{rk}_{\mathbb C} M_{j_1,\ldots, j_k}$. Indeed, the number of divisors of $\tilde{M}_j$ is $\textnormal{rk}_{\mathbb C} M_j$. Moreover, the number of common divisors of $M_{j_1}, \ldots, M_{j_k}$ is $\textnormal{rk}_{\mathbb C} M_{j_1,\ldots, j_k}$. Applying the inclusion-exclusion principle \cite[Chapter 2.1]{St} we finish the proof.
\end{proof}

\end{proposition}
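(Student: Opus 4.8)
The plan is to compute $\dim_{\mathbb C}A$ combinatorially, by identifying the \emph{standard monomials} of $A$ — the monomials of $T$ that do not belong to $I$ — and then recognizing their number as an inclusion-exclusion of Waring ranks. The two inputs I would use are the explicit shape of the apolar ideal of a monomial, namely $M_j^{\perp}=(X_0^{a_{0,j}+1},X_1^{a_{1,j}+1},\ldots,X_n^{a_{n,j}+1})$, and the classical formula for the Waring rank of a monomial, which gives $\textnormal{rk}_{\mathbb C}(x_0^{b_0}\cdots x_n^{b_n})=\prod_{i=1}^n(b_i+1)$ whenever $1\leq b_0\leq b_1\leq\cdots\leq b_n$.

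First I would determine the standard monomials. Since $I=(X_0)+\cap_{j\in J}M_j^{\perp}$ is a monomial ideal (a sum and intersection of monomial ideals), a monomial $N=X_0^{c_0}\cdots X_n^{c_n}$ lies in $I$ if and only if $X_0\mid N$ or $N\in M_j^{\perp}$ for every $j$; and $N\in M_j^{\perp}$ if and only if $c_i>a_{i,j}$ for some $i$, that is, if and only if $N$ does not divide $X_0^{a_{0,j}}X_1^{a_{1,j}}\cdots X_n^{a_{n,j}}$. Hence $N\notin I$ precisely when $X_0\nmid N$ and $N$ divides $X_0^{a_{0,j}}\cdots X_n^{a_{n,j}}$ for at least one $j$; and since a monomial of $X_0$-degree $0$ divides $X_0^{a_{0,j}}\cdots X_n^{a_{n,j}}$ exactly when it divides $\tilde M_j:=X_1^{a_{1,j}}\cdots X_n^{a_{n,j}}$, the standard monomials of $A$ are exactly the monomials dividing some $\tilde M_j$, $j\in J$. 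Writing $D_j$ for the set of monomial divisors of $\tilde M_j$, this says $\dim_{\mathbb C}A=\bigl|\bigcup_{j\in J}D_j\bigr|$.

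Next I would rewrite each term in the inclusion-exclusion expansion of $\bigl|\bigcup_{j\in J}D_j\bigr|$ as a Waring rank. The set $D_j$ has $\prod_{i=1}^n(a_{i,j}+1)$ elements; because $1\leq a_{0,j}\leq a_{i,j}$ for every $i$, the exponent $a_{0,j}$ is minimal among the exponents of $M_j$, so the monomial-rank formula gives $|D_j|=\textnormal{rk}_{\mathbb C}M_j$. More generally, for a subset $\{j_1,\ldots,j_k\}\subset J$ one has $\bigcap_{l=1}^k D_{j_l}=\{\text{divisors of }\gcd(\tilde M_{j_1},\ldots,\tilde M_{j_k})\}$, whose cardinality is $\prod_{i=1}^n\bigl(\min_l a_{i,j_l}+1\bigr)$; and since the inequalities $1\leq a_{0,j}\leq a_{i,j}$ are preserved under taking minima over $l$, the $x_0$-exponent of $M_{j_1,\ldots,j_k}=x_0^{\min_l a_{0,j_l}}\cdots x_n^{\min_l a_{n,j_l}}$ is again minimal, so $\bigl|\bigcap_l D_{j_l}\bigr|=\textnormal{rk}_{\mathbb C}M_{j_1,\ldots,j_k}$. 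The inclusion-exclusion principle applied to $\bigcup_{j\in J}D_j$ then yields
\[
\dim_{\mathbb C}A=\Bigl|\bigcup_{j\in J}D_j\Bigr|=\sum_{\emptyset\neq\{j_1,\ldots,j_k\}\subset J}(-1)^{k+1}\Bigl|\bigcap_{l=1}^k D_{j_l}\Bigr|=\sum_{\emptyset\neq[k]\subset J}(-1)^{k+1}\textnormal{rk}_{\mathbb C}M_{j_1,\ldots,j_k},
\]
which is the asserted formula.

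The argument is essentially bookkeeping, so I do not expect a serious obstacle; the one point that genuinely needs care — and the only place where the hypothesis $1\leq a_{0,j}\leq a_{i,j}$ is used — is ensuring that the monomial-rank formula applies not merely to each $M_j$ but to every greatest common divisor $M_{j_1,\ldots,j_k}$, i.e. that the $x_0$-exponent remains minimal after intersecting exponent vectors. Once that is settled, the identification of divisor counts with ranks and the passage to inclusion-exclusion are routine.
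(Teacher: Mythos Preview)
Your proof is correct and follows essentially the same approach as the paper's: identify the standard monomials of $A$ as the divisors of at least one $\tilde{M}_j$, then count via inclusion--exclusion and match the intersection cardinalities with the Waring ranks of the corresponding gcd's. You are simply more explicit than the paper in writing out the apolar ideal of a monomial, the rank formula $\textnormal{rk}_{\mathbb C}(x_0^{b_0}\cdots x_n^{b_n})=\prod_{i\geq 1}(b_i+1)$, and the verification that the hypothesis $1\leq a_{0,j}\leq a_{i,j}$ survives under taking minima so that the rank formula still applies to each $M_{j_1,\ldots,j_k}$.
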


\section{Special collections of monomials}

We begin with special pairs of monomials and we determine their complex simultaneous Waring rank.

\begin{proposition}\label{upperbound for pairs}

Let $\mathcal M=\lbrace M_1, M_2\rbrace$, $M_1={x_0}^{c}x_1^{a_1}\cdots x_n^{a_n}$ and $M_2={x_0}^{c}x_1^{b_1}\cdots x_n^{b_n}$ with $1\leq c \leq a_i$ and $1\leq c \leq b_i$, for $i=1,\ldots,n$. Let $M_{12} = \textnormal{gcd}(M_1,M_2)$ be the greatest common divisor of $M_1,M_2$. Suppose $| a_ i - b_i | = 0,1$ or $| a_ i - b_i |\geq c+1$. Then
$$
\textnormal{rk}_{\mathbb C} \mathcal M \leq \textnormal{rk}_{\mathbb C} M_1+\textnormal{rk}_{\mathbb C} M_2 - \textnormal{rk}_{\mathbb C} M_{12}.
$$

\begin{proof}
For each $i>0$ consider the ideals $A_i=(X_0^{c+1},X_i^{a_i+1})$ and $B_i=(X_0^{c+1},X_i^{b_i+1})$. Our argument is for each fixed $i$. \\
\indent Let us fix an arbitrary $i=1,\ldots,n$ and suppose $a_i\leq b_i$. Let $F_i\in (A_i)_{a_i+1}$ be a square-free element and note that all monomials of $F_i$ belong to $B_i$ except possibly a scalar multiple of $X_i^{a_i+1}$. Now consider the colon ideal
\[J_i=B_i:(X_i^{a_i+1})=(X_0^{c+1},X_i^{b_i-a_i}).\]
Since $| a_ i - b_i | = 0,1$ or $| a_ i - b_i |\geq c+1$, there exists $H_i\in J_i$ square-free of minimal degree with no common factors with $F_i$. Indeed, if $a_i-b_i = 0$, we may take $H_i$ to be a scalar. If $b_i - a_i = 1$, we may take $H_i = X_i$, as $F_i$ is general and hence avoids such a zero. If $b_i-a_i \geq c+1$, we may take $H_i = X_0^{b_i-a_i}-X_i^{b_i-a_i}$, as $F_i$ is general and hence avoids such zeros. Thus $G_i=F_iH_i$ is a square-free element of $(B_i)_{b_i+1}$. \\
\indent The case $a_i>b_i$ is analogous. Namely, we pick $F_i\in (B_i)_{b_i+1}$ and construct $G_i=F_iH_i$ square-free in $(A_i)_{a_i+1}$.\\
\indent In conclusion, for each $i>0$, we construct square-free binary forms $F_i\in A_i$ and $G_i\in B_i$ such that one of the two divides the other. Now, let us consider the ideals
\[I_1=(F_1,\ldots,F_n)\subset M_1^\perp\mbox{ and }I_2=(G_1,\ldots,G_n)\subset M_2^\perp.\]
Note that $I_1=I(\mathbb{X}_1)$ and $I_2=I(\mathbb{X}_2)$, where $\mathbb{X}_i$ is a minimal apolar scheme of points for $M_i$. By construction $\mathbb{X}_1\cup\mathbb{X}_2$ is apolar to $M_1$ and $M_2$. Moreover, the ideal of $\mathbb{X}_1\cap\mathbb{X}_2$ is precisely
\[I_1+I_2=\left( \textnormal{gcd}(F_i,G_i):i=1,\ldots,n\right).\]
The scheme $\mathbb{X}_1\cap\mathbb{X}_2$ is a minimal apolar scheme for $M_{12}$. Hence $|\mathbb{X}_1\cup\mathbb{X}_2|=\mathrm{rk}_{\mathbb C} M_1+\mathrm{rk}_{\mathbb C} M_2-\mathrm{rk}_{\mathbb C} M_{12}$. This completes the proof.
\end{proof}

\end{proposition}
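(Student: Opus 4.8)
The plan is to build, separately for $M_1$ and $M_2$, minimal \emph{reduced} apolar schemes of points $\mathbb{X}_1,\mathbb{X}_2\subset\mathbb{P}^n$ so that $\mathbb{X}_1\cup\mathbb{X}_2$ stays apolar to both monomials while $\mathbb{X}_1\cap\mathbb{X}_2$ is a minimal apolar scheme for $M_{12}$; the apolarity lemma recalled above then turns $\mathbb{X}_1\cup\mathbb{X}_2$ into a simultaneous Waring decomposition, and the elementary count $|\mathbb{X}_1\cup\mathbb{X}_2|=|\mathbb{X}_1|+|\mathbb{X}_2|-|\mathbb{X}_1\cap\mathbb{X}_2|$ gives the stated inequality. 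The input is the description by Carlini, Catalisano and Geramita of apolar schemes of monomials: if $M=x_0^{d_0}\cdots x_n^{d_n}$ with $d_0=\min_i d_i$, then $M^{\perp}=(X_0^{d_0+1},\dots,X_n^{d_n+1})$, and for each choice of square-free binary forms $F_i\in(X_0^{d_0+1},X_i^{d_i+1})$ of degree $d_i+1$ not divisible by $X_0$, the ideal $(F_1,\dots,F_n)$ is radical and cuts out the reduced, minimal apolar scheme consisting of the $\prod_{i=1}^n(d_i+1)=\textnormal{rk}_{\mathbb C} M$ points $[1:s_1:\cdots:s_n]$ with $F_i(1,s_i)=0$.

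I would carry out the construction one coordinate at a time. Fix $i>0$; by symmetry assume $a_i\le b_i$. Take $F_i$ to be a general element of $(X_0^{c+1},X_i^{a_i+1})_{a_i+1}$, which is then square-free and not divisible by $X_0$. I now need a square-free $H_i$ of degree $b_i-a_i$, coprime to $F_i$, inside the colon ideal $(X_0^{c+1},X_i^{b_i+1}):(X_i^{a_i+1})=(X_0^{c+1},X_i^{b_i-a_i})$, so that $G_i:=F_iH_i$ is square-free of degree $b_i+1$ and, using $F_i\in(X_0^{c+1},X_i^{a_i+1})$, still lies in $(X_0^{c+1},X_i^{b_i+1})$. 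Now the only monomial of degree strictly below $c+1$ in $(X_0^{c+1},X_i^{b_i-a_i})$ is $X_i^{b_i-a_i}$, so such an $H_i$ can be produced exactly when $b_i-a_i=0$ (take $H_i$ a nonzero scalar), $b_i-a_i=1$ (take $H_i=X_i$, whose single root a general $F_i$ avoids), or $b_i-a_i\ge c+1$ (take $H_i=X_0^{b_i-a_i}-X_i^{b_i-a_i}$, whose $b_i-a_i$ roots a general $F_i$ avoids). This is precisely the hypothesis $|a_i-b_i|=0,1$ or $\ge c+1$; it is the crux of the argument, since when $2\le|a_i-b_i|\le c$ the only candidate up to scalar is $X_i^{b_i-a_i}$, which is not square-free.

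Having produced, for every $i>0$, square-free binary forms $F_i\in(X_0^{c+1},X_i^{a_i+1})$ and $G_i\in(X_0^{c+1},X_i^{b_i+1})$ with one dividing the other, I set $I_1=(F_1,\dots,F_n)\subset M_1^{\perp}$ and $I_2=(G_1,\dots,G_n)\subset M_2^{\perp}$, with associated minimal apolar schemes $\mathbb{X}_1,\mathbb{X}_2$ (of cardinalities $\textnormal{rk}_{\mathbb C} M_1$, $\textnormal{rk}_{\mathbb C} M_2$) by the statement recalled above. Both ideals being radical, $I(\mathbb{X}_1\cup\mathbb{X}_2)=I_1\cap I_2\subset M_1^{\perp}\cap M_2^{\perp}$, so $\mathbb{X}_1\cup\mathbb{X}_2$ is apolar to $M_1$ and to $M_2$, whence $\textnormal{rk}_{\mathbb C}\mathcal M\le|\mathbb{X}_1\cup\mathbb{X}_2|$. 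For the intersection, note that since one of $F_i,G_i$ divides the other in each coordinate, $I_1+I_2=(\textnormal{gcd}(F_i,G_i):i=1,\dots,n)$; each $\textnormal{gcd}(F_i,G_i)$ is a square-free binary form of degree $\min(a_i,b_i)+1$ lying in $(X_0^{c+1},X_i^{\min(a_i,b_i)+1})\subset M_{12}^{\perp}$ (using $c\le\min(a_i,b_i)$), so $I_1+I_2$ defines a minimal apolar scheme of $M_{12}$ and, counting points, $|\mathbb{X}_1\cap\mathbb{X}_2|=\prod_{i=1}^n(\min(a_i,b_i)+1)=\textnormal{rk}_{\mathbb C} M_{12}$. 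Combining the three cardinalities gives $\textnormal{rk}_{\mathbb C}\mathcal M\le\textnormal{rk}_{\mathbb C} M_1+\textnormal{rk}_{\mathbb C} M_2-\textnormal{rk}_{\mathbb C} M_{12}$.

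The step I expect to be the main obstacle is the one in the second paragraph: recognizing that one should arrange the divisibility $G_i=F_iH_i$ and then pinning down exactly for which exponent gaps $b_i-a_i$ a square-free $H_i$ of the forced degree survives in the colon ideal --- this is where the arithmetic condition on $|a_i-b_i|$ is born. The remaining points --- that $G_i$ really belongs to $M_2^{\perp}$, that these ``binary'' configurations are reduced of the predicted size with radical ideal, and that $I_1+I_2$ computes the reduced intersection $\mathbb{X}_1\cap\mathbb{X}_2$ rather than a thicker scheme --- are routine verifications once the coordinatewise picture is in place.
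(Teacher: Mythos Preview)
Your proposal is correct and follows essentially the same approach as the paper's proof: the coordinatewise construction of square-free binary forms $F_i,G_i$ with one dividing the other via the colon ideal $(X_0^{c+1},X_i^{b_i-a_i})$, the identical case analysis for $H_i$ according to $|a_i-b_i|\in\{0,1\}$ or $\ge c+1$, and the final inclusion--exclusion count via $I_1+I_2=(\gcd(F_i,G_i))$ all match the paper. If anything, you are more explicit in several places --- you spell out why the intermediate range $2\le|a_i-b_i|\le c$ obstructs the construction, and you justify that $I_1+I_2$ really cuts out a minimal apolar scheme for $M_{12}$.
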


\begin{theorem}\label{sim Waring for pairs}

Let $\mathcal M=\lbrace M_1, M_2\rbrace$, $M_1={x_0}^{c}x_1^{a_1}\cdots x_n^{a_n}$ and $M_2={x_0}^{c}x_1^{b_1}\cdots x_n^{b_n}$, with $1\leq c \leq a_i$ and $1\leq c \leq b_i$, for $i=1,\ldots,n$. Let $M_{12} = \textnormal{gcd}(M_1,M_2)$ be the greatest common divisor of $M_1,M_2$. Suppose $| a_ i - b_i | = 0,1$ or $| a_ i - b_i |\geq c+1$. Then the simultaneous Waring rank satisfies
$$
\textnormal{rk}_{\mathbb C} \mathcal M = \textnormal{rk}_{\mathbb C} M_1+\textnormal{rk}_{\mathbb C} M_2 - \textnormal{rk}_{\mathbb C} M_{12}.
$$

\begin{proof}
Proposition \ref{upperbound for pairs} gives the upper bound. We need to show the lower bound. Proposition \ref{lower bound general} and Proposition \ref{inclusion-exclusion} give the desired lower bound.
\end{proof}

\begin{remark}

In the statement of Theorem \ref{sim Waring for pairs} the conditions $| a_ i - b_i | = 0,1$ or $| a_ i - b_i |\geq c+1$ are always satisfied for $c=1$.

\end{remark}

\end{theorem}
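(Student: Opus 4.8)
The plan is to obtain the equality by sandwiching $\textnormal{rk}_{\mathbb C}\mathcal M$ between two matching bounds, both of which are already essentially available. The upper bound $\textnormal{rk}_{\mathbb C}\mathcal M \le \textnormal{rk}_{\mathbb C} M_1 + \textnormal{rk}_{\mathbb C} M_2 - \textnormal{rk}_{\mathbb C} M_{12}$ is exactly Proposition~\ref{upperbound for pairs}, which is stated under the same arithmetic hypothesis on the differences $|a_i-b_i|$, so nothing further is needed there. The whole of the remaining work is the reverse inequality, and for that the arithmetic hypothesis plays no role whatsoever.

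For the lower bound, the first observation is that $x_0$ carries the (weakly) smallest exponent in both monomials: its exponent is $c$, while the $x_i$-exponents are $a_i\ge c$ in $M_1$ and $b_i\ge c$ in $M_2$ for each $i\ge 1$. This has two consequences. First, Proposition~\ref{lower bound general} applies with the distinguished variable taken to be $X_0$, giving $\textnormal{rk}_{\mathbb C}\mathcal M\ge\dim_{\mathbb C}A$ where $A=T/I$ and $I=(X_0)+(M_1^{\perp}\cap M_2^{\perp})$. Second, the hypothesis $1\le a_{0,j}\le a_{i,j}$ required by Proposition~\ref{inclusion-exclusion} holds for $J=\{1,2\}$, so that proposition evaluates
$$
\dim_{\mathbb C}A=\sum_{\emptyset\neq[k]\subset\{1,2\}}(-1)^{k+1}\,\textnormal{rk}_{\mathbb C}M_{j_1,\ldots,j_k}=\textnormal{rk}_{\mathbb C}M_1+\textnormal{rk}_{\mathbb C}M_2-\textnormal{rk}_{\mathbb C}M_{12}.
$$
Chaining this with the upper bound closes the argument.

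Given the two propositions the deduction is immediate, so there is no genuine obstacle at the level of the theorem itself; the content sits in Proposition~\ref{upperbound for pairs}. Were one to prove the statement from scratch, the hard part would be that construction: exhibiting, coordinate by coordinate, square-free binary forms $F_i\in(X_0^{c+1},X_i^{a_i+1})$ and $G_i\in(X_0^{c+1},X_i^{b_i+1})$ with one dividing the other, and then verifying that the associated apolar point schemes $\mathbb X_1,\mathbb X_2$ satisfy: $\mathbb X_1\cup\mathbb X_2$ is apolar to both $M_1$ and $M_2$, while $\mathbb X_1\cap\mathbb X_2$ is a minimal apolar scheme for $M_{12}$. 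This is precisely where the condition $|a_i-b_i|=0,1$ or $|a_i-b_i|\ge c+1$ is used: it guarantees that the relevant colon ideal $(X_0^{c+1},X_i^{b_i-a_i})$ contains a square-free element of the correct (minimal) degree $|a_i-b_i|$ coprime to the general form $F_i$, which is exactly what keeps the resulting scheme of minimal cardinality. The lower bound, by contrast, is purely combinatorial — once Proposition~\ref{inclusion-exclusion} matches the standard monomials of $A$ with the divisors of $X_1^{a_{1,j}}\cdots X_n^{a_{n,j}}$ for $j=1,2$, it is ordinary inclusion--exclusion — and in fact it holds with no arithmetic restriction and for collections of more than two such monomials; the hypothesis in the theorem is only what makes this general lower bound attained.
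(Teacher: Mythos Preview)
Your proposal is correct and follows exactly the paper's own proof: upper bound from Proposition~\ref{upperbound for pairs}, lower bound from Proposition~\ref{lower bound general} combined with Proposition~\ref{inclusion-exclusion}. Your added verification that the hypotheses $1\le c\le a_i,b_i$ make $x_0$ the variable of minimal exponent (so that Proposition~\ref{inclusion-exclusion} applies with $X_0$) is a useful detail the paper leaves implicit.
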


The following example shows how the construction in Proposition \ref{upperbound for pairs} gives a minimal simultaneous decomposition for special pairs.

\begin{example}\label{ex1}
We keep the notation of Proposition \ref{upperbound for pairs} and Theorem \ref{sim Waring for pairs}. Let $\mathcal M = \lbrace M_1, M_2 \rbrace$, where $M_1 = x_0x_1^3x_2^4x_3^7$ and $M_2=x_0x_1^4x_2^2x_3^5$. In this case, we have $c=1$ and $M_{12} = x_0x_1^3x_2^2x_3^5$. We consider the ideals
$A_1=(X_0^2,X_1^4)$, $B_1=(X_0^2,X_1^5)$, $A_2=(X_0^2,X_2^5)$, $B_2=(X_0^2,X_2^3)$, $A_3=(X_0^2,X_3^8)$, $B_3=(X_0^2,X_3^6)$. We can choose general square-free binary forms $F_1\in A_1, F_2\in B_2, F_3\in B_3$, so that, for $H_1 = X_1$, $H_2 = X_2^2-X_0^2$, $H_3 = X_3^2-X_3^2$, the binary forms
$G_1 = F_1H_1\in B_1, G_2 = F_2H_2\in A_2, G_3 = F_3H_3\in A_3$ are square-free. The zeros of $G_1, G_2, G_3$ provides the minimal apolar scheme to the collection $\mathcal M$. Let $M = \textnormal{lcm}(M_1,M_2) = x_0x_1^4x_2^4x_3^7$. Thus $\textnormal{rk}_{\mathbb C} \mathcal M \leq \textnormal{rk}_{\mathbb C} M = 200$. On the other hand, we have $\textnormal{rk}_{\mathbb C} \mathcal M = \textnormal{rk}_{\mathbb C} M_1 + \textnormal{rk}_{\mathbb C} M_2 - \textnormal{rk}_{\mathbb C} M_{12} = 178$.
\end{example}

We introduce two definitions in order to extend Theorem \ref{sim Waring for pairs} to more general collections of monomials; these definitions are crucial to apply our proof techniques.

\begin{definition}[{\bf (1,1)-free collections}]\label{(1,1)free}

Let $\mathcal M= \lbrace M_j\rbrace_{j\in J}$ be a set of monomials, where $M_j=x_0x_1^{a_{1,j}}\cdots x_n^{a_{n,j}}$.  For each $1\leq i\leq n$, let us order linearly the exponents of the variable $x_i$ appearing in the monomials $M_j\in \mathcal M$; let $\mathcal E_i = \lbrace a_{i,k_1}\leq a_{i,k_2}\leq \cdots \leq a_{i,k_{|J|}}\rbrace$ be the set of those exponents. Let $\mathcal D_i$ denote the multiset of differences $\lbrace a_{i,k_{s+1}} - a_{i,k_{s}} \rbrace$ of successive elements in $\mathcal E_i$. The collection $\mathcal M$ is said to be $(1,1)$-{\it free} if, for every $i\in \lbrace 1,\ldots, n \rbrace$, the multiset $\mathcal D_i$ contains $1$ at most one time.

\end{definition}

\begin{definition}[{\bf Free collections}]\label{Free collection}

Let $\mathcal M= \lbrace M_j\rbrace_{j\in J}$ be a set of monomials, where $M_j=x_0^cx_1^{a_{1,j}}\cdots x_n^{a_{n,j}}$. As in Definition \ref{(1,1)free}, let $\mathcal D_i$ denote the multiset of differences  $\lbrace a_{i,k_{s+1}} - a_{i,k_{s}} \rbrace$ of successive elements in $\mathcal E_i$. The collection $\mathcal M$ is said to be {\it free} if $\mathcal M$ is $(1,1)$-free and, for every $i$, the multiset $\mathcal D_i$ does not contain nonzero integers smaller than or equal to $c$.

\end{definition}

\begin{proposition}\label{inclusion-exclusion for c>1}

Let $\mathcal M= \lbrace M_j\rbrace_{j\in J}$, where $M_j=x_0^cx_1^{a_{1,j}}\cdots x_n^{a_{n,j}}$, with $a_{i,j}\geq c$, for every $i,j$. Assume $\mathcal M$ is a free collection. Let $M_{j_1,\ldots, j_k} = \textnormal{gcd}(M_{j_1}, \ldots, M_{j_k})$ be the greatest common divisor of $M_{j_1}, \ldots, M_{j_k}$. The simultaneous Waring rank satisfies
$$
\textnormal{rk}_{\mathbb C} \mathcal M = \sum_{\emptyset \neq [k] \subset J} (-1)^{k+1} \textnormal{rk}_{\mathbb C} M_{j_1,\ldots, j_k}.
$$

\begin{proof}
Proposition \ref{lower bound general} and Proposition \ref{inclusion-exclusion} give the lower bound. We show the upper bound constructing
an minimal apolar scheme of points $\mathbb X$. For each $1\leq i\leq n$, let us order linearly the exponents of the variable $x_i$ appearing in the monomials $M_j\in \mathcal M$; let $\mathcal E_i = \lbrace a_{i,k_1}\leq a_{i,k_2}\leq \cdots \leq a_{i,k_{|J|}}\rbrace$ be the set of those exponents. Since the collection $\mathcal M$ is {\it free}, for every $1\leq i\leq n$, the multiset $\mathcal D_i$ of differences of successive elements in $\mathcal E_i$ is $(1,1)$-free and does not contain nonzero integers smaller than or equal to $c$. For each $1\leq j \leq |J|$, the set $\mathbb X_i^j$ denotes the set of $x_i$-coordinates of points apolar to $M_j$. The procedure presented in the proof of Proposition \ref{upperbound for pairs} produces the sets $\mathbb X_i^j$, such that $\mathbb X_i^{k_1}\subseteq \mathbb X_i^{k_2}\subseteq \cdots \subseteq \mathbb X_i^{k_j}\subseteq \cdots \subseteq \mathbb X_i^{k_{|J|}}$, for every $i$. Since the collection $\mathcal M$ is {\it free}, the construction gives a set of points $\mathbb X$, all of which differ in at least one coordinate.\\
\indent Thus for each monomial $M_j$ we have a collection of points $\mathbb X_j$ that constitute a minimal apolar scheme for $M_j$. Hence $|\mathbb X_j| = \textnormal{rk}_{\mathbb C} M_j$. The intersection $\mathbb X_{j_1}\cap \cdots \cap \mathbb X_{j_k}$ is a minimal collection of points apolar to $M_{j_1,\ldots,j_k}$.
Since the points in the union $\mathbb X = \bigcup_{j\in J} \mathbb X_j$ decompose all monomials in $\mathcal M$, $\textnormal{rk}_{\mathbb C} \mathcal M \leq |\bigcup_{j\in J} \mathbb X_j|$. By the inclusion-exclusion principle \cite[Chapter 2.1]{St}, the cardinality of the union of the sets $\mathbb X_j$ is given by the alternating sum
$$
\sum_{\emptyset \neq [k] \subset J} (-1)^{k+1} |\mathbb X_{j_1}\cap \cdots \cap \mathbb X_{j_k}| = \sum_{\emptyset \neq [k] \subset J} (-1)^{k+1} \textnormal{rk}_{\mathbb C} M_{j_1,\ldots, j_k}.
$$
\noindent This concludes the proof.
\end{proof}

\end{proposition}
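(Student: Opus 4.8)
The plan is to match the combinatorial lower bound from Proposition \ref{lower bound general} (made explicit as an inclusion-exclusion of ranks by Proposition \ref{inclusion-exclusion}) with an upper bound coming from an explicit minimal apolar scheme. The lower bound is immediate from the cited results, so the real content is the construction of a simultaneous apolar scheme $\mathbb X$ whose cardinality is exactly $\sum_{\emptyset \neq [k] \subset J} (-1)^{k+1} \textnormal{rk}_{\mathbb C} M_{j_1,\ldots, j_k}$. The idea is to build $\mathbb X$ one coordinate direction at a time, mimicking the pair construction of Proposition \ref{upperbound for pairs}, and then take the union over $j\in J$ of the individual minimal apolar schemes $\mathbb X_j$ of the monomials $M_j$.

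First, I would fix $i\in\{1,\dots,n\}$ and sort the exponents of $x_i$ appearing in $\mathcal M$ as $\mathcal E_i = \{a_{i,k_1}\leq\cdots\leq a_{i,k_{|J|}}\}$. Working inductively along this chain, I would produce nested finite subsets $\mathbb X_i^{k_1}\subseteq\mathbb X_i^{k_2}\subseteq\cdots\subseteq\mathbb X_i^{k_{|J|}}$ of the $x_i$-coordinate line: at each step, the passage from the exponent $a_{i,k_s}$ to $a_{i,k_{s+1}}$ is governed by the difference $d=a_{i,k_{s+1}}-a_{i,k_s}\in\mathcal D_i$, and the freeness hypothesis guarantees $d=0$, $d=1$ (occurring at most once), or $d\geq c+1$ — exactly the three cases handled in Proposition \ref{upperbound for pairs}. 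In the $d=0$ case nothing is added; in the $d=1$ case we adjoin one general new point; in the $d\geq c+1$ case we adjoin the $d$ new points cut out by a general square-free binary form of degree $d$ in the colon ideal $(X_0^{c+1},X_i^{d})$, chosen to have no common factor with what was built before. This is the direct generalization of the $F_iH_i$ factorization and it yields, for each $j$, a square-free binary form in $(X_0^{c+1},X_i^{a_{i,j}+1})$, i.e., the $x_i$-part of a minimal apolar scheme for $M_j$.

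Next, I would assemble $\mathbb X_j$ as the product scheme built from the coordinate data $\{\mathbb X_i^j\}_{i=1}^n$ (together with the $X_0$-direction treated exactly as in Proposition \ref{upperbound for pairs}), so that $|\mathbb X_j|=\textnormal{rk}_{\mathbb C} M_j$ and $\mathbb X_j\subset M_j^{\perp}$. The nesting $\mathbb X_i^{k_1}\subseteq\cdots\subseteq\mathbb X_i^{k_{|J|}}$ forces $\mathbb X_{j_1}\cap\cdots\cap\mathbb X_{j_k}$ to be precisely the product scheme associated to the coordinatewise minimal exponents of $M_{j_1},\dots,M_{j_k}$, which is a minimal apolar scheme for their gcd $M_{j_1,\ldots,j_k}$; hence $|\mathbb X_{j_1}\cap\cdots\cap\mathbb X_{j_k}|=\textnormal{rk}_{\mathbb C} M_{j_1,\ldots,j_k}$. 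Setting $\mathbb X=\bigcup_{j\in J}\mathbb X_j$, apolarity shows $\mathbb X$ simultaneously decomposes all of $\mathcal M$, so $\textnormal{rk}_{\mathbb C}\mathcal M\leq|\mathbb X|$, and the inclusion-exclusion principle \cite[Chapter 2.1]{St} gives $|\mathbb X|=\sum_{\emptyset \neq [k] \subset J} (-1)^{k+1}\textnormal{rk}_{\mathbb C} M_{j_1,\ldots,j_k}$. Combined with the lower bound this is the claimed equality.

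The main obstacle — and the place where the freeness hypothesis is genuinely used — is verifying that the points of $\mathbb X$ are all \emph{distinct}, equivalently that the intersection pattern of the $\mathbb X_j$ is the naive one predicted by gcd's. The danger is a "collision" in some coordinate direction: two generically chosen square-free forms at different stages of the chain could share a zero, which would make $|\mathbb X|$ smaller than the inclusion-exclusion count and break the matching with the lower bound. The $(1,1)$-freeness condition (at most one difference equal to $1$ per variable) and the no-small-positive-difference condition (no difference in $\{2,\dots,c\}$) are exactly what rule this out: they ensure that at each step the required square-free form of minimal degree coprime to the previous data actually exists — a scalar when $d=0$, the coordinate $X_i$ when $d=1$ (usable only once, since a repeated $X_i$ would be a common factor), and $X_0^{d}-X_i^{d}$ when $d\geq c+1$ (whose zeros avoid the general previous zeros). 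I would make this genericity argument precise by a dimension count on the space of square-free binary forms in each relevant graded piece, exactly paralleling the three-case analysis in the proof of Proposition \ref{upperbound for pairs}, and then conclude that $\mathbb X$ has the asserted cardinality.
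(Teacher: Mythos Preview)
Your proposal is correct and follows essentially the same route as the paper: the lower bound via Propositions \ref{lower bound general} and \ref{inclusion-exclusion}, and the upper bound by iterating the pair construction of Proposition \ref{upperbound for pairs} along the sorted chain $\mathcal E_i$ to produce nested coordinate sets $\mathbb X_i^{k_1}\subseteq\cdots\subseteq\mathbb X_i^{k_{|J|}}$, then taking the union of the resulting minimal apolar schemes and counting by inclusion-exclusion. Your write-up is in fact more explicit than the paper's about the three cases $d=0,1,\geq c+1$ and about why freeness prevents collisions, but the underlying argument is the same.
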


\begin{remark}

The assumption on the collection $\mathcal M$ being free is necessary for our construction. For instance, if the $j$th multiset of differences $\mathcal D_j$ contains $1$ two times for some $j$, the construction in the proof of Proposition \ref{inclusion-exclusion for c>1} does not produce distinct points. Hence, when a collection is not free, we do not know whether this upper bound is valid or not.

\end{remark}

\begin{corollary}\label{inclusion-exclusion for c=1}
Let $\mathcal M= \lbrace M_j\rbrace_{j\in J}$, where $M_j=x_0x_1^{a_{1,j}}\cdots x_n^{a_{n,j}}$, with $a_{i,j}\geq 1$ for every $i,j$.  Assume $\mathcal M$ is a $(1,1)$-free collection. Let $M_{j_1,\ldots, j_k} = \textnormal{gcd}(M_{j_1}, \ldots, M_{j_k})$ be the greatest common divisor of $M_{j_1}, \ldots, M_{j_k}$. The simultaneous Waring rank satisfies
$$
\textnormal{rk}_{\mathbb C} \mathcal M = \sum_{\emptyset \neq [k] \subset J} (-1)^{k+1} \textnormal{rk}_{\mathbb C} M_{j_1,\ldots, j_k}.
$$
\end{corollary}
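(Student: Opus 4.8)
The plan is to deduce Corollary \ref{inclusion-exclusion for c=1} directly from Proposition \ref{inclusion-exclusion for c>1} by observing that the case $c=1$ is precisely the case in which the notion of \emph{free collection} and the notion of \emph{$(1,1)$-free collection} coincide. Concretely, for $c=1$ the extra requirement in Definition \ref{Free collection} is that each multiset of differences $\mathcal D_i$ contains no nonzero integer smaller than or equal to $1$; but the only positive integer $\leq 1$ is $1$ itself, so this requirement reduces to ``$\mathcal D_i$ contains $1$ at most once'', which is exactly the $(1,1)$-freeness condition from Definition \ref{(1,1)free}. Hence a $(1,1)$-free collection with $c=1$ is automatically free in the sense of Definition \ref{Free collection}.

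First I would state this observation explicitly: if $M_j = x_0 x_1^{a_{1,j}}\cdots x_n^{a_{n,j}}$ with $a_{i,j}\geq 1$, then $M_j = x_0^{c} x_1^{a_{1,j}}\cdots x_n^{a_{n,j}}$ with $c=1$ and $a_{i,j}\geq c$ for all $i,j$, so the hypotheses on exponents in Proposition \ref{inclusion-exclusion for c>1} are met. Next I would verify that ``$\mathcal M$ is $(1,1)$-free'' implies ``$\mathcal M$ is free'' when $c=1$, using the reduction of the difference condition described above. With these two points in hand, Proposition \ref{inclusion-exclusion for c>1} applies verbatim, yielding
$$
\textnormal{rk}_{\mathbb C} \mathcal M = \sum_{\emptyset \neq [k] \subset J} (-1)^{k+1} \textnormal{rk}_{\mathbb C} M_{j_1,\ldots, j_k},
$$
which is the assertion of the corollary.

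There is essentially no obstacle here: the corollary is a specialization, and the only thing to check is the bookkeeping identification of the two freeness conditions at $c=1$ (together with the companion remark in the excerpt, which already flags that the interval conditions in Theorem \ref{sim Waring for pairs} are automatic when $c=1$). If one prefers an entirely self-contained argument, one can alternatively re-run the proof of Proposition \ref{inclusion-exclusion for c>1} with $c=1$ throughout: the lower bound comes again from Proposition \ref{lower bound general} and Proposition \ref{inclusion-exclusion}, and the upper bound comes from the point-construction in the proof of Proposition \ref{upperbound for pairs}, whose hypothesis $|a_i-b_i|=0,1$ or $|a_i-b_i|\geq c+1$ becomes $|a_i-b_i|\neq 1$ or is automatically usable, matching exactly the $(1,1)$-free requirement on the nested chains $\mathbb X_i^{k_1}\subseteq\cdots\subseteq\mathbb X_i^{k_{|J|}}$. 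Either route finishes the proof in a few lines.
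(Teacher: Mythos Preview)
The paper gives no separate proof of this corollary; it is stated immediately after Proposition~\ref{inclusion-exclusion for c>1} as its $c=1$ specialization, and that is precisely your approach.

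There is, however, a slip in your reduction of the definitions. You write that for $c=1$ the extra condition in Definition~\ref{Free collection} --- that $\mathcal D_i$ contain no nonzero integer $\leq c$ --- ``reduces to `$\mathcal D_i$ contains $1$ at most once'\,''. It does not: read literally it says $\mathcal D_i$ contains no $1$ at all, which is strictly stronger than $(1,1)$-freeness. A $(1,1)$-free collection in which some $\mathcal D_i$ contains exactly one $1$ would then fail to be free, and you could not invoke Proposition~\ref{inclusion-exclusion for c>1} as a black box. This is almost certainly an imprecision in the paper's wording (the construction in Proposition~\ref{upperbound for pairs} handles a single difference equal to $1$ via $H_i=X_i$, so the intended forbidden set is $\{2,\dots,c\}$, which is empty when $c=1$; note also that otherwise the $(1,1)$-free clause in Definition~\ref{Free collection} would be redundant). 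Still, as written your equivalence is false. Your fallback --- re-running the proof of Proposition~\ref{inclusion-exclusion for c>1} with $c=1$, where the hypothesis $|a_i-b_i|\in\{0,1\}$ or $|a_i-b_i|\geq c+1=2$ of Proposition~\ref{upperbound for pairs} is automatic and only the $(1,1)$-free condition is needed to keep the constructed points distinct --- is the clean and correct way to finish.
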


\indent We now extend Theorem \ref{sim Waring for pairs} to monomials having, possibly, different supports.

\begin{theorem}\label{notsamesupportpairs}

Let $M_1=x_0y_1^{a_1}\cdots y_s^{a_s}z_1^{b_1}\cdots z_r^{b_r}$ and $M_2=x_0y_1^{c_1}\cdots y_s^{c_s}t_1^{d_1}\cdots t_{\ell}^{d_{\ell}}$. Here $b_i, d_i\geq 2$, and $a_i, c_i \geq 0$. Let $M_{12}=\textnormal{gcd}(M_1,M_2)$ be the greatest common divisor of $M_1$ and $M_2$. The simultaneous Waring rank of $\mathcal M = \lbrace M_1, M_2\rbrace$ satisfies
$$
\textnormal{rk}_{\mathbb C} \mathcal M = \textnormal{rk}_{\mathbb C} M_1+\textnormal{rk}_{\mathbb C} M_2 - \textnormal{rk}_{\mathbb C} M_{12}.
$$
\begin{proof}
As in Theorem  \ref{sim Waring for pairs}, the lower bound is deduced from a variation of Proposition \ref{lower bound general}: the algebra $A = \mathbb C[X_0,Y_{s}, Z_{r}, T_{\ell}]/I$, where $I = (X_0)+M_1^{\perp}\cap M_2^{\perp}$, has dimension
$\textnormal{rk}_{\mathbb C} M_1+\textnormal{rk}_{\mathbb C} M_2 - \textnormal{rk}_{\mathbb C} M_{12}$. Indeed, the monomials not in $I$ are formed by the union of those that are not in $(X_0)+M_1^{\perp}$ and those that are not in $(X_0)+M_2^{\perp}$. These two sets have in common the monomials not divisible by $X_0$ and dividing $M_{12}$. Hence the dimension of the finite $\mathbb C$-algebra $A$ is $\textnormal{rk}_{\mathbb C} M_1+\textnormal{rk}_{\mathbb C} M_2 - \textnormal{rk}_{\mathbb C} M_{12}$. To see the upper bound, for both $M_1$ and $M_2$ we construct minimal apolar schemes containing the points $[1: \alpha_{1,i} : \cdots : \alpha_{s,i} : 0 : \cdots : 0]$ (here the number of the last zeros is $r+\ell$), where $\sum_{i=1}^{\min\lbrace a_i, c_i\rbrace+1} \alpha_{k,i} = 0$ for each $1\leq k\leq s$. Indeed, these points form a minimal apolar scheme for $M_{12}$. Moreover, they can be extended to minimal apolar schemes for both $M_1$ and $M_2$ by the assumption $b_i, d_i\geq 2$. Taking the union of those minimal apolar schemes for $M_1$ and $M_2$ yields the result.
\end{proof}
\end{theorem}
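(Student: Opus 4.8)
The plan is to mimic the two-sided strategy used for Theorem \ref{sim Waring for pairs}, adapting it to the situation where the two monomials no longer share the full set of variables. Write $M_1 = x_0 \prod y_i^{a_i}\prod z_i^{b_i}$ and $M_2 = x_0\prod y_i^{c_i}\prod t_i^{d_i}$, so that the common part is governed by the $y$-variables while the $z$- and $t$-variables are private to $M_1$ and $M_2$ respectively. The greatest common divisor is $M_{12} = x_0\prod y_i^{\min\{a_i,c_i\}}$, since the $z$ and $t$ exponents contribute nothing to the gcd.

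\smallskip
\noindent\textbf{Lower bound.} First I would record the variant of Proposition \ref{lower bound general}: setting $I = (X_0) + M_1^{\perp}\cap M_2^{\perp}$ in $T = \mathbb C[X_0, Y_1,\ldots,Y_s, Z_1,\ldots,Z_r,T_1,\ldots,T_\ell]$, the simultaneous rank is bounded below by $\dim_{\mathbb C} T/I$. As in the proof of Proposition \ref{inclusion-exclusion}, the standard monomials of $T/I$ are exactly the monomials not divisible by $X_0$ that divide $\tilde M_1 = \prod Y_i^{a_i}\prod Z_i^{b_i}$ or divide $\tilde M_2 = \prod Y_i^{c_i}\prod T_i^{d_i}$ (a monomial fails to be in $M_k^{\perp}$ precisely when all its exponents are bounded by those of $\tilde M_k$). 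Counting by inclusion--exclusion, the number of such monomials is $\#\{\text{divisors of }\tilde M_1\} + \#\{\text{divisors of }\tilde M_2\} - \#\{\text{common divisors}\}$; and a common divisor must avoid all $Z_i$ and all $T_i$, so it divides $\prod Y_i^{\min\{a_i,c_i\}}$, whose number of divisors is $\mathrm{rk}_{\mathbb C} M_{12}$ by the known closed form for the rank of a monomial. Since $\#\{\text{divisors of }\tilde M_k\} = \mathrm{rk}_{\mathbb C} M_k$, the lower bound $\mathrm{rk}_{\mathbb C}\mathcal M \geq \mathrm{rk}_{\mathbb C} M_1 + \mathrm{rk}_{\mathbb C} M_2 - \mathrm{rk}_{\mathbb C} M_{12}$ follows.

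\smallskip
\noindent\textbf{Upper bound.} For the matching upper bound I would build, coordinate by coordinate, a set of points that simultaneously apolar-decomposes both monomials. The key is to start from a common apolar scheme for $M_{12}$: take points of the form $[1 : \alpha_{1,i} : \cdots : \alpha_{s,i} : 0 : \cdots : 0]$ with the last $r+\ell$ coordinates zero, where for each $k$ the $y_k$-coordinates $\alpha_{k,i}$ run over the roots of a general square-free binary form in $(X_0^{2}, Y_k^{\min\{a_k,c_k\}+1})$; product-of-these-schemes gives a minimal apolar scheme for $M_{12}$ of size $\mathrm{rk}_{\mathbb C} M_{12}$. Because $b_i, d_i \geq 2$ (so each private variable has exponent at least $2$, hence admits genuine ``extra'' apolar points in the $X_0^{2}, Z_i^{b_i+1}$ and $X_0^{2}, T_i^{d_i+1}$ directions without collision with the $X_0$-power), this common scheme extends to a minimal apolar scheme $\mathbb X_1$ for $M_1$ and to a minimal apolar scheme $\mathbb X_2$ for $M_2$, with $\mathbb X_1\cap\mathbb X_2$ exactly the chosen scheme for $M_{12}$. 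Then $\mathbb X_1\cup\mathbb X_2$ is apolar to both $M_1$ and $M_2$, and $|\mathbb X_1\cup\mathbb X_2| = |\mathbb X_1| + |\mathbb X_2| - |\mathbb X_1\cap\mathbb X_2| = \mathrm{rk}_{\mathbb C} M_1 + \mathrm{rk}_{\mathbb C} M_2 - \mathrm{rk}_{\mathbb C} M_{12}$, giving the upper bound.

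\smallskip
\noindent I expect the main obstacle to be the bookkeeping in the extension step: one must check that the general square-free binary forms realizing the $z_i$- and $t_i$-directions can be chosen so that the resulting points are all distinct and that the intersection of the two schemes is \emph{exactly} the $M_{12}$-scheme (not larger). This is precisely where $b_i, d_i \geq 2$ is used — it guarantees each private direction contributes at least one new apolar point beyond the $c=1$ power of $x_0$, so the constructions for $M_1$ and $M_2$ genuinely separate in those coordinates; generality of the chosen forms then prevents accidental coincidences. Once that is in place, the inclusion--exclusion count is immediate and the two bounds coincide.
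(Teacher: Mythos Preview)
Your proposal is correct and follows essentially the same route as the paper's own proof: the lower bound via $\dim_{\mathbb C} T/((X_0)+M_1^\perp\cap M_2^\perp)$ counted by inclusion--exclusion on divisors of $\tilde M_1$ and $\tilde M_2$, and the upper bound by building minimal apolar schemes $\mathbb X_1,\mathbb X_2$ sharing a common $M_{12}$-scheme supported on points with the last $r+\ell$ coordinates zero. Your identification of where $b_i,d_i\geq 2$ enters --- namely, to allow a square-free generator in $(X_0^{2},Z_i^{b_i+1})$ (resp.\ $(X_0^{2},T_i^{d_i+1})$) divisible by $Z_i$ (resp.\ $T_i$) so that the zero coordinate is among the apolar values --- is exactly the point the paper gestures at, and your discussion of why $\mathbb X_1\cap\mathbb X_2$ is precisely the $M_{12}$-scheme is a welcome elaboration of what the paper leaves implicit.
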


\begin{remark}
In Theorem \ref{notsamesupportpairs}, assuming instead $b_i = 1$ (or $d_i=1$) for some $i$, we do not know whether the upper bound is valid or not.
\end{remark}

\begin{corollary}
Let $\mathcal M= \lbrace M_1, M_2\rbrace$, $M_1=x_0x_1^{a_1}\cdots x_n^{a_n}$ and $M_2=x_0y_1^{b_1}\cdots y_s^{b_s}$, with $a_i,b_i\geq 2$. The simultaneous Waring rank of $\mathcal M$ satisfies
$$
\textnormal{rk}_{\mathbb C} \mathcal M = \textnormal{rk}_{\mathbb C} M_1+\textnormal{rk}_{\mathbb C} M_2 - 1.
$$
\end{corollary}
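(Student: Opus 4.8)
The plan is to obtain this statement as an immediate specialization of Theorem \ref{notsamesupportpairs}. First I would observe that the two monomials $M_1=x_0x_1^{a_1}\cdots x_n^{a_n}$ and $M_2=x_0y_1^{b_1}\cdots y_s^{b_s}$ are precisely of the shape treated there, in the degenerate case in which no variable other than $x_0$ is common to both: in the notation of Theorem \ref{notsamesupportpairs} one takes the shared block $y_1^{a_1}\cdots y_s^{a_s}$ to be empty, lets the $z$-variables be $x_1,\ldots,x_n$ (with exponents $a_1,\ldots,a_n$) and the $t$-variables be $y_1,\ldots,y_s$ (with exponents $b_1,\ldots,b_s$). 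The hypothesis $a_i,b_i\geq 2$ of the Corollary is exactly the hypothesis $b_i,d_i\geq 2$ required in the theorem on the non-shared variables, while the conditions on the (now absent) shared variables are vacuous, so the theorem applies.

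Next I would compute the relevant greatest common divisor. Since $x_1,\ldots,x_n$ and $y_1,\ldots,y_s$ are disjoint sets of variables and $x_0$ occurs to the first power in both $M_1$ and $M_2$, we have $M_{12}=\textnormal{gcd}(M_1,M_2)=x_0$. As $x_0$ is a linear form, $\textnormal{rk}_{\mathbb C} x_0=1$. Substituting $\textnormal{rk}_{\mathbb C} M_{12}=1$ into the formula of Theorem \ref{notsamesupportpairs} yields $\textnormal{rk}_{\mathbb C}\mathcal M=\textnormal{rk}_{\mathbb C} M_1+\textnormal{rk}_{\mathbb C} M_2-1$, as claimed.

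There is no genuine obstacle here: the only points that need checking are that the stated monomials really do fall under Theorem \ref{notsamesupportpairs} (including that the shared block being empty is an allowed case, which is visible from its proof, where the construction then reduces to the single point $[1:0:\cdots:0]$ apolar to $x_0$) and that $\textnormal{gcd}(M_1,M_2)=x_0$, both of which are immediate. If one preferred a self-contained argument, the lower bound would come from the variant of Proposition \ref{lower bound general} used in the proof of Theorem \ref{notsamesupportpairs}, and the upper bound from gluing a minimal apolar scheme for $M_1$ to a minimal apolar scheme for $M_2$ along the single common point $[1:0:\cdots:0]$; but invoking the theorem is the cleanest route.
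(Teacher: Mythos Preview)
Your proposal is correct and is exactly the intended route: the paper states this corollary immediately after Theorem \ref{notsamesupportpairs} without proof, and your argument---specializing to the case with empty shared block so that $M_{12}=x_0$ has rank $1$---is precisely how it follows. The check that the empty shared block is admissible and that the hypotheses $a_i,b_i\geq 2$ match the theorem's conditions on the non-shared variables is all that is needed.
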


\indent A natural case is when the collection of monomials $\mathcal M$ is given by all the first partial derivatives of a fixed monomial $M$.

\begin{proposition}\label{LemmaDerivatives}
Let $M=x_0^{a_0}\cdots x_n^{a_n}$ be a monomial with $a_i>1$ for all $i$. Let  $\mathcal M = \{\partial_{x_0}M,\ldots,\partial_{x_n}M\}$ be the collection of its first derivatives. Then
 $$\rk_{\CC}M = \rk_{\CC}\caM.$$

\begin{proof}
Let $H=\bigcap_{i=0}^n (\partial_{x_i} M)^{\perp}$ and $d=\deg M$. Since $M^{\perp}+T_d = H$, by the apolarity lemma \cite[Lemma 1.15]{IK}, we have $\rk_{\CC}\caM \leq \rk_{\CC}M$. Again, by the apolarity lemma, proving the equality is now equivalent to showing that every reduced ideal of points in $H$ is not supported on strictly less than $\rk_{\CC} M$ points. Let us set $F=x_0^{a_0-1}x_1^{a_1}\cdots x_n^{a_n}$. Note that $\rk_{\CC} F=\rk_{\CC} M$ and, by definition of $H$, $H\subset F^{\perp}$. This concludes the proof.
\end{proof}

\end{proposition}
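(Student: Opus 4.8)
The plan is to argue via the apolarity lemma, exactly as the surrounding results do. Set $d = \deg M$ and let $H = \bigcap_{i=0}^n (\partial_{x_i}M)^{\perp}$ be the intersection of the apolar ideals of all first derivatives. The simultaneous Waring rank of $\mathcal M$ is the minimal cardinality of a reduced set of points $\mathbb X$ whose ideal $I_{\mathbb X}$ is contained in $H$. So I would split the argument into the easy inequality $\rk_{\CC}\mathcal M \le \rk_{\CC} M$ and the harder reverse inequality $\rk_{\CC}\mathcal M \ge \rk_{\CC} M$.

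For the upper bound, I would first identify $H$ explicitly. Each $(\partial_{x_i}M)^{\perp}$ contains $M^{\perp}$ together with some extra degree-$d$ forms (morally, $\partial_{x_i}M$ has degree $d-1$, so its apolar ideal contains everything in degree $d$ — one has to be a little careful here about the grading convention, but the point is that lowering the degree by one only adds top-degree generators). Consequently $M^{\perp} + T_d \subseteq H$, and in fact I expect equality $M^{\perp} + T_d = H$ after checking that no form of degree $<d$ killing every $\partial_{x_i}M$ can fail to kill $M$ — this is a direct monomial computation using $M = x_0^{a_0}\cdots x_n^{a_n}$ with all $a_i \ge 1$. Given $M^{\perp} \subseteq H$, any reduced ideal of points inside $M^{\perp}$ realizing $\rk_{\CC} M$ already lies in $H$, so $\rk_{\CC}\mathcal M \le \rk_{\CC} M$.

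For the lower bound, the key trick is to produce a single monomial $F$ with $H \subseteq F^{\perp}$ and $\rk_{\CC} F = \rk_{\CC} M$. The natural candidate is $F = x_0^{a_0-1}x_1^{a_1}\cdots x_n^{a_n}$, i.e.\ $M$ with one exponent dropped by one; this is legitimate precisely because $a_0 > 1$, so $F$ still has every exponent $\ge 1$ and hence, by the known monomial rank formula $\rk_{\CC}(x_0^{b_0}\cdots x_n^{b_n}) = \prod_{i=1}^n (b_i+1)$ for $b_0 \le b_i$ (reordering so the smallest exponent is on $x_0$), one checks $\rk_{\CC} F = \rk_{\CC} M$: dropping the exponent of the variable carrying the minimum does not change the product of the other shifted exponents. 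Since $\partial_{x_0}M$ is a scalar multiple of $F$, certainly $H \subseteq (\partial_{x_0}M)^{\perp} = F^{\perp}$. Therefore any reduced ideal of points in $H$ is also apolar to $F$, forcing at least $\rk_{\CC} F = \rk_{\CC} M$ points. Combining the two inequalities gives $\rk_{\CC}\mathcal M = \rk_{\CC} M$.

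The main obstacle I anticipate is nailing down the identity $H = M^{\perp} + T_d$ and, relatedly, being careful about which variable plays the role of the one with the smallest exponent when invoking the monomial rank formula — the inclusion $H \subseteq F^{\perp}$ must use a derivative direction $\partial_{x_i}M$ for which the corresponding $F$ still satisfies the minimality hypothesis on exponents needed for the rank formula; since all $a_i > 1$ this is automatic, but it is the point where the hypothesis $a_i > 1$ is genuinely used (for $a_i = 1$ the derivative $\partial_{x_i}M$ drops a variable entirely and the argument breaks, consistent with the earlier remark that such collections need not obey an inclusion--exclusion formula). Everything else is bookkeeping with the apolarity lemma and the explicit combinatorics of monomial divisors.
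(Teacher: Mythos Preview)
Your proposal is correct and follows essentially the same approach as the paper's proof: both establish the upper bound via $M^{\perp}+T_d=H$ and the lower bound by noting that $H\subset F^{\perp}$ for $F=x_0^{a_0-1}x_1^{a_1}\cdots x_n^{a_n}$ with $\rk_{\CC}F=\rk_{\CC}M$. Your write-up is in fact more careful than the paper's in flagging that one must take the derivative in a variable of minimal exponent for the equality $\rk_{\CC}F=\rk_{\CC}M$ to hold, and in noting where the hypothesis $a_i>1$ is genuinely used.
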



\section{Final remarks}

\noindent {\bf Sub-additivity of the rank and rank of binomials.} From the definition, it follows that $\rk_{\CC} (F_1+\ldots+F_r)\leq \rk_{\CC} F_1+\ldots+\rk_{\CC} F_r$. The well-known Strassen's conjecture states that, whenever the forms are in disjoint sets of variables, then equality holds; see \cite{Strassen1973,CCO2017, T2015}. The simultaneous Waring rank enters the picture because
\[\rk_{\CC} (F_1+\ldots+F_r)\leq \rk_{\CC} \lbrace F_1,\ldots, F_r \rbrace \leq\rk_{\CC} F_1+\ldots+\rk_{\CC} F_r.\]

\noindent In particular, whenever the simultaneous rank is strictly less than the sum of the ranks, the rank is strictly sub-additive.

As a direct consequence of Theorem \ref{sim Waring for pairs}, we obtain an upper bound for the rank of special binomials.
\begin{corollary}\label{rankofbinomial}

Let $M_1, M_2, M_{12}$ be as in Theorem  \ref{sim Waring for pairs}. Then \[\textnormal{rk}_{\mathbb C}(M_1+M_2)\leq \textnormal{rk}_{\mathbb C} M_1+\textnormal{rk}_{\mathbb C} M_2 - \textnormal{rk}_{\mathbb C} M_{12}.\]

\end{corollary}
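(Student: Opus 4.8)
The plan is to deduce Corollary \ref{rankofbinomial} directly from Theorem \ref{sim Waring for pairs} via the chain of inequalities recorded in the ``Sub-additivity'' paragraph. First I would recall that for any forms $F_1, F_2$ one has $\textnormal{rk}_{\mathbb C}(F_1+F_2)\leq \textnormal{rk}_{\mathbb C}\lbrace F_1, F_2\rbrace$: if $L_1,\ldots,L_r$ simultaneously decompose $F_1$ and $F_2$, writing $F_i = \sum_j \lambda_{i,j} L_j^{d}$ (here $\deg F_1 = \deg F_2 = d$, which holds in our setting since $M_1$ and $M_2$ have the same degree $c+\sum a_i = c + \sum b_i$ — this needs a word of justification, or one restricts to the equal-degree case), then $F_1+F_2 = \sum_j (\lambda_{1,j}+\lambda_{2,j}) L_j^{d}$ exhibits a decomposition of length at most $r$.

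Then I would simply invoke Theorem \ref{sim Waring for pairs}, whose hypotheses are exactly those imposed on $M_1, M_2, M_{12}$ in the statement of the corollary, to replace $\textnormal{rk}_{\mathbb C}\lbrace M_1, M_2\rbrace$ by $\textnormal{rk}_{\mathbb C} M_1 + \textnormal{rk}_{\mathbb C} M_2 - \textnormal{rk}_{\mathbb C} M_{12}$, obtaining
$$
\textnormal{rk}_{\mathbb C}(M_1+M_2) \leq \textnormal{rk}_{\mathbb C}\lbrace M_1, M_2\rbrace = \textnormal{rk}_{\mathbb C} M_1 + \textnormal{rk}_{\mathbb C} M_2 - \textnormal{rk}_{\mathbb C} M_{12},
$$
which is the claimed bound.

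The only genuinely delicate point is the degree-matching issue: the definition of simultaneous Waring rank allows the forms $F_i$ to have different degrees, but the passage from a simultaneous decomposition to a decomposition of the \emph{sum} $F_1+F_2$ only makes sense when $\deg F_1 = \deg F_2$ (otherwise $F_1 + F_2$ is not homogeneous). In the present situation $M_1 = x_0^c x_1^{a_1}\cdots x_n^{a_n}$ and $M_2 = x_0^c x_1^{b_1}\cdots x_n^{b_n}$ need not have the same total degree, so strictly speaking the corollary should either carry the additional hypothesis $\sum a_i = \sum b_i$, or be read as: whenever $M_1 + M_2$ is homogeneous, the bound holds. I would add a parenthetical remark to this effect. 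Apart from that, the proof is a one-line consequence and presents no obstacle.

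\begin{proof}
Assume $\deg M_1 = \deg M_2 = d$, so that $M_1 + M_2$ is a homogeneous form of degree $d$. If $L_1,\ldots,L_r$ are linear forms simultaneously decomposing $M_1$ and $M_2$, say $M_1 = \sum_{j=1}^r \lambda_j L_j^d$ and $M_2 = \sum_{j=1}^r \mu_j L_j^d$ with $\lambda_j,\mu_j\in\mathbb C$, then $M_1 + M_2 = \sum_{j=1}^r (\lambda_j+\mu_j) L_j^d$, so that $\textnormal{rk}_{\mathbb C}(M_1+M_2)\leq \textnormal{rk}_{\mathbb C}\lbrace M_1, M_2\rbrace$. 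The hypotheses on $M_1, M_2$ are those of Theorem \ref{sim Waring for pairs}, which therefore gives $\textnormal{rk}_{\mathbb C}\lbrace M_1, M_2\rbrace = \textnormal{rk}_{\mathbb C} M_1 + \textnormal{rk}_{\mathbb C} M_2 - \textnormal{rk}_{\mathbb C} M_{12}$. Combining the two yields
$$
\textnormal{rk}_{\mathbb C}(M_1+M_2)\leq \textnormal{rk}_{\mathbb C} M_1 + \textnormal{rk}_{\mathbb C} M_2 - \textnormal{rk}_{\mathbb C} M_{12},
$$
as claimed.
\end{proof}
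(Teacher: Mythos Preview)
Your proof is correct and follows exactly the route the paper intends: it states the corollary as ``a direct consequence of Theorem \ref{sim Waring for pairs}'' immediately after recording the inequality $\rk_{\CC}(F_1+\cdots+F_r)\leq \rk_{\CC}\{F_1,\ldots,F_r\}$, and gives no further proof. Your remark about the equal-degree hypothesis is a genuine subtlety that the paper leaves implicit.
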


\begin{example}
Let $M_1=x_0x_1x_2x_3^2$ and $M_2=x_0x_1x_2^2x_3$, we have $\rk_{\CC} M_1=\rk_{\CC} M_2=12$. Thus, $\rk_{\CC} (M_1+M_2)\leq 24$. Corollary \ref{rankofbinomial} implies the improved upper bound
\[\rk_{\CC} (M_1+M_2)\leq 12+12-8=16.\]
However, in a private communication, Bruce Reznick informed us that $\rk_{\CC} (M_1+M_2)\leq 15$. Hence this bound is not sharp.
\end{example}

\noindent {\bf High simultaneous rank.}
\noindent The study of {\em maximal} Waring ranks compared with {\em generic} Waring ranks has attracted a lot of attention; see, for instance, \cite{BT2015,BT2016,BHMT2017}. A similar analysis could be performed in the case of the simultaneous Waring ranks. Here several difficulties arise due, for example, to the fact that generic simultaneous ranks are largely unknown.

\begin{remark}
The simultaneous rank for a generic collection of $k+1$ forms of degree $d$, in $n+1$ variable, is $h+1$ if and only if the Segre-Veronese variety $\mathbb{P}^k\times\mathbb{P}^n$ embedded in bidegree $(1,d)$ is such that its $h$-secant variety fills up the ambient space. Indeed, the simultaneous rank for such a generic collection can be thought of the rank of the generic point of the projective $k$-dimensional linear space spanned by those $k+1$ forms of degree $d$. Thus the knowledge of the generic simultaneous rank relates to the open problem of classifying defective Segre-Veronese varieties; see, for example, \cite{BBC2012}.
\end{remark}

In \cite{CCG}, it is shown that monomials in three variables provide examples of forms having complex rank higher than the generic rank. An analogue result is the following:

\begin{proposition}
Let $S=\CC[x_0,x_1,x_2]$. Then
\[\rk_{\CC} \lbrace x_0x_1^tx_2^{t+1},x_0x_1^{t+1}x_2^t\rbrace=t^2+4t+3,\]
\[\rk_{\CC} \lbrace x_0x_1^tx_2^{t+2},x_0x_1^{t+2}x_2^t\rbrace=t^2+6t+5.\]
\noindent For $t\geq 1$, these ranks are strictly higher than $\rk_{\CC} \lbrace F_1,F_2\rbrace$, for generic $F_1,F_2\in S_d$.
\end{proposition}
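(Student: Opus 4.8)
The plan is to compute both sides of each claimed equality and compare. For the left-hand side, I would apply Theorem \ref{sim Waring for pairs} to the pair $\mathcal M = \lbrace x_0x_1^tx_2^{t+1}, x_0x_1^{t+1}x_2^t\rbrace$. Here $c=1$, $(a_1,a_2)=(t,t+1)$ and $(b_1,b_2)=(t+1,t)$, so $|a_i-b_i|=1$ for $i=1,2$, which satisfies the hypothesis of the theorem (indeed the conditions are automatic for $c=1$ by the remark following the theorem). Thus $\rk_{\CC}\mathcal M = \rk_{\CC} M_1 + \rk_{\CC} M_2 - \rk_{\CC} M_{12}$ with $M_{12}=x_0x_1^tx_2^t$. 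The Carlini--Catalisano--Geramita formula for the complex rank of a monomial $x_0^{c_0}\cdots x_n^{c_n}$ with $c_0\leq c_i$ gives $\rk_{\CC}(x_0^{c_0}\cdots x_n^{c_n}) = \prod_{i=1}^n (c_i+1)$. So $\rk_{\CC} M_1 = \rk_{\CC} M_2 = (t+1)(t+2)$ and $\rk_{\CC} M_{12}=(t+1)^2$, whence $\rk_{\CC}\mathcal M = 2(t+1)(t+2) - (t+1)^2 = (t+1)(2(t+2)-(t+1)) = (t+1)(t+3) = t^2+4t+3$. The second identity is entirely analogous: with $(a_1,a_2)=(t,t+2)$, $(b_1,b_2)=(t+2,t)$ we have $|a_i-b_i|=2 = c+1$, again within the hypothesis; then $\rk_{\CC} M_1 = \rk_{\CC} M_2 = (t+1)(t+3)$, $M_{12}=x_0x_1^tx_2^t$ with $\rk_{\CC} M_{12}=(t+1)^2$, giving $2(t+1)(t+3)-(t+1)^2 = (t+1)(t+5) = t^2+6t+5$.

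For the comparison with the generic simultaneous rank, I would invoke the identification in the remark preceding the proposition: the generic simultaneous rank of a pair of forms of degree $d$ in $3$ variables equals the generic rank of a point in the linear span of the two forms, equivalently the smallest $h$ with $\sigma_h(\mathbb P^1\times\mathbb P^2)$ filling the ambient space for the Segre--Veronese embedding in bidegree $(1,d)$. Here $d=2t+1$ for the first collection and $d=2t+2$ for the second. The ambient space $\mathbb P(S_d^{\oplus 2})$ has projective dimension $2\binom{d+2}{2}-1$, and the expected generic rank is $\lceil (2\binom{d+2}{2})/4\rceil = \lceil \binom{d+2}{2}/2\rceil$, since each point of $\mathbb P^1\times\mathbb P^2$ contributes $4$ affine parameters. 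For $d=2t+1$ this expected value is $\lceil (2t+2)(2t+3)/4\rceil = \lceil (t+1)(2t+3)/2\rceil$, which is a quadratic in $t$ with leading term $t^2$; a short estimate shows it is bounded above by roughly $t^2 + 3t + $ constant, hence strictly less than $t^2+4t+3$ for $t\geq 1$. (One should also note that $\mathbb P^1\times\mathbb P^2$ in bidegree $(1,d)$ is known to be non-defective, so the actual generic simultaneous rank agrees with this expected value; this follows from the classification results on Segre--Veronese defectivity referenced in the excerpt.) The same estimate applied with $d=2t+2$ gives a generic value of order $t^2+3t$, strictly below $t^2+6t+5$ for $t\geq 1$.

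Assembling these: the proof consists of (i) verifying the hypotheses of Theorem \ref{sim Waring for pairs} in each case, (ii) plugging in the monomial rank formula to get the closed forms $t^2+4t+3$ and $t^2+6t+5$, and (iii) comparing with the generic simultaneous rank via the Segre--Veronese non-defectivity statement and an elementary inequality between the two quadratics in $t$, valid for $t\geq 1$. The main obstacle is step (iii): one must be careful to justify that the generic simultaneous rank of two degree-$d$ ternary forms really equals the expected dimension count, i.e. that the relevant Segre--Veronese varieties $\mathbb P^1\times\mathbb P^2$ in bidegree $(1,d)$ are non-defective. This is known, but citing the precise source (and checking the few small-$d$ cases do not fall into an exceptional family) is where the care is needed; once that is in hand, the final inequality $\lceil\binom{d+2}{2}/2\rceil < \rk_{\CC}\mathcal M$ is a routine check of two polynomial inequalities.
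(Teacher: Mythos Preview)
Your approach coincides with the paper's: compute the simultaneous ranks via Theorem \ref{sim Waring for pairs} and the monomial rank formula, then compare with the generic simultaneous rank through the secant geometry of the Segre--Veronese $\mathbb{P}^1\times\mathbb{P}^2$ in bidegree $(1,d)$. The computations in step (ii) are correct.

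There is, however, a slip in step (iii): the degrees are $d=2t+2$ for the first pair and $d=2t+3$ for the second (note $1+t+(t+1)=2t+2$, not $2t+1$). With the correct degrees the generic ranks are $\lceil t^2+\tfrac{7}{2}t+3\rceil$ and $\lceil t^2+\tfrac{9}{2}t+5\rceil$, which is exactly what the paper records. The off-by-one matters for two reasons. First, with your value $d=2t+1$ the case $t=1$ lands at $d=3$, which is precisely the unique defective case for $\mathbb{P}^1\times\mathbb{P}^2$ in bidegree $(1,d)$ (the paper cites \cite[Theorem 1.3]{BD2010} for this), so your blanket non-defectivity claim would fail there; with the correct degrees $d=3$ never occurs for $t\geq 1$. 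Second, the actual gap between the special and generic ranks is only on the order of $t/2$ and $3t/2$ respectively, not a full $t$ as your ``roughly $t^2+3t$'' estimate suggests; the comparison is tight enough that you should write out the exact ceilings rather than argue with leading terms, as the paper does.
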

\begin{proof}
The rank for the generic pair $F_1,F_2\in S_d$ is
$$
\left \lceil \frac{1}{2} \binom{d+2}{2} \right \rceil
$$
\noindent unless $d=3$, in which case the $5$th secant variety has codimension one, rather than filling up the ambient space; see \cite[Theorem 1.3]{BD2010}.
For $d=2t+2$, the generic rank is
\[\left\lceil t^2+\frac{7}{2}t+3\right\rceil.\]
\noindent For $d=2t+3$, the generic rank is
\[\left\lceil t^2+\frac{9}{2}t+5\right\rceil.\]
The result follows by a direct computation.
\end{proof}

\section{Appendix}

\begin{verbatim}

------ We use Apolarity script available at https://github.com/zteitler/ApolarIdeal.m2.
------ Script 1. Input: List of homogeneous polynomials of arbitrary degrees.
------ Output: Lower bound for the simultaneous Waring rank, by Prop. 2.1.

Lowerbound = method()
Lowerbound (List):=(L) -> (
    R:= ring(first L);
    J = sub(ideal(1),R);
    for i from 0 to #L-1 do
    J = intersect(J,Apolar(L_i));
    l = random(1,R);
    I = ideal(l)+(J:ideal(l));
    D = {};
    for f in L do D=append(D,degree(f));
    s = (max D)_0;
    lb=sum for i from 0 to s list hilbertFunction(i,I);
    return lb
    );

------ Script 2. Input: A monomial.
------ Output: Its complex Waring rank.

CWMon = method()
CWMon (RingElement):=(m)-> (
    Exponents=exponents(m);
    if(#Exponents > 1 ) then (
    error "Expected monomial as input";
  );
   Exponents=(exponents(m))_0;
   Exponents1={};
   for i in Exponents do if (i!=0) then Exponents1=append(Exponents1,i+1);
   return product(Exponents1)/(min(Exponents1))
   );

------ Script 3. Input: A monomial.
------ Output: Position(s) of the minimal exponent(s).

Minspot = method()
Minspot(RingElement):=(m)-> (
     if (#exponents(m)>1) then (
    error "Expected a monomial as input";
  );
    Minspt = {};
    for i from 0 to #(exponents(m))_0-1 do (
	if ( (exponents(m))_0_i == min((exponents(m))_0)) then
    Minspt = append(Minspt,i+1)
    );
    return set Minspt
    );

------ Script 4. Input: A list of monomials.
------ Output: Alternating sum of the complex Waring ranks of their gcd.

AlternatingSum = method()
AlternatingSum(List):=(L) -> (
t=#L;
  Alt = {};	
 for k from 1 to t do (
	Subsets = subsets(t,k);
    for S in Subsets do (
	    SList = toList S;
	    G = L_SList_0;
	    for j from 1 to #SList-1 do G = gcd(G,L_SList_j);
	    rk = (-1)^(k+1)*CWMon(G);
	 Alt = append(Alt, rk);
	    );
	);
    return sum(Alt)	
);

------ Script 5. Input: A list of monomials.
------ Output: Dimension of the algebra appearing in Proposition 2.3.

DimAlg = method()
DimAlg(List):=(L) -> (
    R:= ring(first L);
    for m in L do
    if (#exponents(m)>1) then (
    error "Expected list of monomials as input";
  );
for m in L do
    if (min((exponents(m))_0)==0) then (
    error "Expected list of monomials with full support";
  );
Min = {};
for m in L do Min=append(Min, Minspot(m));
Int = Min_0;
for i from 1 to #L-1 do Int = Int*Min_i;
if (#Int == 0) then (error "Monomials do not satisfy hypothesis of Prop. 2.3";
);
IntList = toList Int;
r = IntList_0;
t = ideal (vars R)_{r-1};
J = sub(ideal(1),R);
for i from 0 to #L-1 do J = intersect(J,Apolar(L_i));
I = t+(J:t);
D = {};
for m in L do D=append(D,degree(m));
s = (max D)_0;
da = sum for i from 0 to s list hilbertFunction(i,I);
return da;
);	

------ If L satisfies Prop. 2.3, then

DimAlg(L)==AlternatingSum(L)

------ is true

------ Script 6. Input: A list of monomials.
------ Output: Check if the list is a free collection, according to Def. 3.2.

Checkfree = method()
Checkfree(List):=(L) -> (
R:= ring(first L);
Min = {};
for m in L do Min=append(Min, Minspot(m));
Int = Min_0;
for i from 1 to #L-1 do Int = Int*Min_i;
if (#Int == 0) then (error "Monomials do not satisfy hypothesis of Prop. 3.4";
);
IntList = toList Int;
r = IntList_0;
T = {};
for m in L do T = append(T,(exponents(m))_0_(r-1));
Tset = set T;
if (#Tset != 1) then (error "Monomials do not satisfy hypothesis of Prop. 3.4";
    );
c = T_0;
v =  numgens R-1;
for i from 0 to v do (
    Exp = {};
    for j from 0 to #L-1 do Exp = append(Exp,(exponents(L_j))_0_i);
    Exp = sort Exp;
    Diff =  {};
    for k from 0 to #Exp-2 do Diff = append(Diff, Exp_(k+1)-Exp_k);
    Num1 =  {};
    Numc =  {};
    for l from 0 to #Diff-1 do (
    if (Diff_l == 1) then Num1=append(Num1,l)
    else if (Diff_l>c) then Numc=append(Numc,l)
    );
if (#Num1 > 1) then (error "Monomials do not satisfy hypothesis of Prop. 3.4";
    );
if (c!=1 and #Numc > 0) then (error "Monomials do not satisfy hypothesis of Prop. 3.4";
    );
);
return print "The collection of monomials is free"
);
\end{verbatim}
\vspace{1cm}

\begin{Acknowledgement}

The authors would like to thank the referee for valuable comments and suggestions. The first author is a member of GNSAGA a group of INDAM. The first author acknowledges the financial support of the Politecnico di Torino.

\end{Acknowledgement}

\nocite{*}
\bibliographystyle{amsplain}
\bibliography{Biblio}
\end{document}